\theoremstyle{plain}
\newtheorem{lemma}{Lemma}[section]
\newtheorem{theorem}{Theorem}[section]
\theoremstyle{definition}
\newcommand{\Cbb}{\mathbb{C}} 
\newcommand{\R}{\mathbb{R}} 
\newcommand{\E}{\mathbb{E}}
\newcommand{\T}{\mathbb{T}}
\newcommand{\Dbb}{\mathbb{D}}
\newcommand{\D}{\mathop{}\!\mathrm{d}}	
\newcommand{\mup}{\mathrm{m}}
\newcommand{\iup}{\mathrm{i}}
\title{Sharp Complex Convexity Estimates}  
\author{Alexander Lindenberger}
\address{A. Lindenberger, Institute of Analysis, Johannes Kepler University Linz,\\Altenberger Straße 69, 4040 Linz, Austria}
\email{alexander.lindenberger@jku.at}
\thanks{The research for this paper was conducted while the first named author was preparing his Master's thesis at the Department of Analysis, Johannes Kepler University Linz. The first and the second named authors were supported by the Austrian Science Foundation (FWF) Pr.Nr P28352-N32}
\author{Paul F. X. Müller}
\address{P. F. X. Müller, Institute of Analysis, Johannes Kepler University Linz,\\Altenberger Straße 69, 4040 Linz, Austria}
\email{paul.mueller@jku.at}
\author{Michael~Schmuckenschläger}
\address{M. Schmuckenschläger, Institute of Analysis, Johannes Kepler University Linz,\\Altenberger Straße 69, 4040 Linz, Austria}
\email{michael.schmuckenschlaeger@jku.at}
\keywords{martingale inequalities, complex convexity, best constants}
\subjclass[2010]{Primary 60G42; Secondary 32U05}
\begin{document}

\begin{abstract}
In this paper we determine the value of the best constants in the $2$\hbox{-}uniform $PL$-convexity estimates of $\Cbb$. This solves a problem posed by W. J. Davis, D. J. H. Garling and N. Tomczak-Jaegermann \cite{davis1984}.
\end{abstract}

\maketitle

\section{Introduction}

Quoting from the book of Pisier \cite[section 11.9]{pisier2016}, ``Haagerup was the first who noticed the importance of the (sharp) inequality''
\begin{equation} \label{inequality_Pisier}
\left( |x|^2 + \frac 1 2 |y|^2 \right)^{\frac 1 2} \le \int_\T |x+ \zeta y|\D\mup(\zeta) \qquad \text{for } x,y\in\Cbb,
\end{equation}
where $\T$ denotes the complex unit circle and $\mup$ the usual Haar measure on $\T$ with $\mup(\T)=1$.
W. J. Davis, D. J. H. Garling and N. Tomczak-Jaegermann  \cite[Proposition 3.1]{davis1984} present a proof of \eqref{inequality_Pisier} based on the power series representation of elliptic integrals. U. Haagerup and G. Pisier obtained an extension of \ref{inequality_Pisier} to non-commutative $H^1$-spaces in \cite{haagerup_pisier}.

W. J. Davis, D. J. H. Garling and N. Tomczak-Jaegermann  \cite[Problem 4]{davis1984} conjecture, that, for $\alpha\in(0,2]$, the inequality
\begin{equation} \label{inequality_C}
 \left(|x|^2 + \frac \alpha 2 |y|^2 \right)^{\frac 1 2}  \le \left( \int_\T |x+\zeta y|^\alpha \D \mup(\zeta) \right)^{\frac 1 \alpha}  \qquad \text{for } x,y\in\Cbb
\end{equation}
holds true and that the constant $\frac \alpha 2$ is sharp. In this paper we give a proof of this conjecture.

In \cite{davis1984} the notion of \emph{uniform $PL$-convexity} is introduced for continuously quasi-normed vector spaces over the complex numbers.
For $r\in [2,\infty)$, such a space $(E,\|\cdot\|)$ is called \emph{$r$-uniformly $PL$-convex}, if and only if for any $\alpha\in(0,\infty)$ there exists a $\lambda>0$, such that
\begin{equation}\label{inequality_quasi}
 \left(\|x\|^r + \lambda\|y\|^r \right)^{\frac 1 r} \le \left(\int_\T \|x + \zeta y \|^\alpha \D \mup(\zeta) \right)^{\frac 1 \alpha} \qquad \text{for } x,y\in E.
\end{equation}
The largest possible value of $\lambda$ satisfying \eqref{inequality_quasi} is then denoted by $I_{r,\alpha}(E)$. Using this notion, $\Cbb$ itself is $2$\hbox{-}uniformly $PL$\hbox{-}convex and it is easily seen, that $I_{2,\alpha}(\Cbb) =1$ for $\alpha\ge 2$. We thus identify the value of $I_{2,\alpha}(\Cbb)$ for each $\alpha\in (0,\infty)$ by proving the following theorem:

\begin{theorem} \label{theorem_main}
 Let $\alpha\in(0,2]$ and $\lambda \le \frac \alpha 2$. We have 
 \begin{equation} \label{inequality_theorem}
 (|x|+ \lambda |y|^2)^{\frac 1 2} \le \left( \int_{\T} |x+ \zeta y|^\alpha \D \mup(\zeta) \right)^{\frac 1 \alpha} \qquad \text{for } x,y\in \Cbb,
 \end{equation}
 and $\frac \alpha 2$ is the best (i. e. largest) real constant satisfying \eqref{inequality_theorem}.
\end{theorem}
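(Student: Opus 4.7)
The plan is to reduce \eqref{inequality_theorem} to a one-variable inequality and then prove it by sandwiching both sides against the common linear lower bound $1+(\alpha/2)^2 t^2$.

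First, substituting $\zeta\mapsto e^{i\phi}\zeta$ leaves $\mup$ invariant, so the integral on the right of \eqref{inequality_theorem} depends on $x,y$ only through $|x|$ and $|y|$; by homogeneity I would take $x=1$ and $y=t\ge 0$, reducing the statement to
\[
\left(1+\tfrac{\alpha}{2}t^2\right)^{\alpha/2}\;\le\; F_\alpha(t)\;:=\;\frac{1}{2\pi}\int_0^{2\pi}\!(1+2t\cos\theta+t^2)^{\alpha/2}\, d\theta\qquad(t\ge 0).
\]
Sharpness of $\alpha/2$ follows at once from Taylor expansion at $t=0$: the left side is $1+(\alpha/2)\lambda t^2+O(t^4)$, while the right side is $1+(\alpha/2)^2 t^2+O(t^4)$ (using $\frac{1}{2\pi}\int\cos^2\theta\, d\theta=\tfrac12$), so any $\lambda>\alpha/2$ would violate the inequality for small $t>0$.

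For the positive statement with $t\in[0,1]$, I would factor $|1+te^{i\theta}|^\alpha=(1+te^{i\theta})^{\alpha/2}(1+te^{-i\theta})^{\alpha/2}$, expand each factor via the binomial series, and integrate termwise; only the diagonal terms survive and one obtains
\[
F_\alpha(t)=\sum_{n\ge 0}\binom{\alpha/2}{n}^2 t^{2n}.
\]
Every coefficient is a square, hence nonnegative, so truncating at $n=1$ gives the lower bound $F_\alpha(t)\ge 1+(\alpha/2)^2 t^2$. Since $\alpha/2\in(0,1]$, the function $u\mapsto u^{\alpha/2}$ is concave on $[0,\infty)$; its tangent at $u=1$ yields $(1+\tfrac{\alpha}{2}t^2)^{\alpha/2}\le 1+(\alpha/2)^2 t^2$. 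Chaining the two estimates completes the case $t\in[0,1]$.

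For $t>1$ I would use the reflection identity $F_\alpha(t)=t^\alpha F_\alpha(1/t)$, which follows from $|1+te^{i\theta}|=t\,|1+t^{-1}e^{-i\theta}|$ together with the invariance of $\mup$ under $\theta\mapsto-\theta$. Applying the case already proved at $1/t\in(0,1)$ gives $F_\alpha(t)\ge (t^2+\tfrac{\alpha}{2})^{\alpha/2}$, and this dominates $(1+\tfrac{\alpha}{2}t^2)^{\alpha/2}$ because $(t^2+\tfrac{\alpha}{2})-(1+\tfrac{\alpha}{2}t^2)=(t^2-1)(1-\alpha/2)\ge 0$ for $t\ge 1$ and $\alpha\le 2$. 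The only place I expect to exercise care is the validity of the power series at the boundary $t=1$, but this is classical: the series has nonnegative terms summing to the finite quantity $F_\alpha(1)$, and Abel's theorem applies. The decisive idea of the whole argument is that one need not compare the two sides coefficient by coefficient — an approach that actually fails for some odd indices when $\alpha<2$ — but only pass through the intermediate linearization $1+(\alpha/2)^2 t^2$.
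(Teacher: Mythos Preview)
Your proof is correct and takes a genuinely different route from the paper's.

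Both arguments reduce to $x=1$, $y=t>0$, share the intermediate majorant $1+(\alpha/2)^2t^2$ on $t\in[0,1]$, and invoke the Bernoulli-type estimate $(1+(\alpha/2)t^2)^{\alpha/2}\le 1+(\alpha/2)^2t^2$. They differ in two essential respects. First, for $t\le 1$ the paper obtains $F_\alpha(t)\ge 1+(\alpha/2)^2t^2$ by rewriting the circle integral as an area integral via Green's function (proved through It\^o's formula), then applying the AM--GM inequality and Jensen's formula to the inner integrand; you obtain the same bound directly from the hypergeometric expansion $F_\alpha(t)=\sum_{n\ge 0}\binom{\alpha/2}{n}^2 t^{2n}$, where nonnegativity of the coefficients is automatic because they are squares. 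Second, for $t>1$ the paper retains the Green's function representation and is forced into two further sub-cases with a more elaborate intermediate function $h$ and a separate calculus lemma; you avoid this entirely via the reflection identity $F_\alpha(t)=t^\alpha F_\alpha(1/t)$ and the elementary observation $(t^2+\alpha/2)-(1+(\alpha/2)t^2)=(t^2-1)(1-\alpha/2)\ge 0$.

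Your argument is shorter, self-contained, and uses nothing beyond the binomial series and concavity. It is notable that the paper explicitly remarks that the power-series approach of Davis--Garling--Tomczak-Jaegermann ``seems difficult to generalize''; your proof shows that it does generalize once one passes through the common linearization rather than attempting a coefficient-wise comparison. The only minor polishing point is to record that the factorization $|1+te^{i\theta}|^\alpha=(1+te^{i\theta})^{\alpha/2}(1+te^{-i\theta})^{\alpha/2}$ and subsequent termwise integration are justified because $\operatorname{Re}(1+te^{i\theta})>0$ for $t<1$, so the principal branches multiply correctly and the double series converges absolutely.
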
 

\section{Preliminaries} \label{sec2}

In this section we collect three main ingredients employed in the proof of Theorem \ref{theorem_main}.

The proof of $I_{2,1} = \frac 1 2$ given in \cite{davis1984} relies on the power series representation of elliptic integrals and seems difficult to generalize. In 1994 the third named author gave an alternative proof, using techniques of stochastic analysis. The main idea was rewriting the line integral on the right-hand side of \eqref{inequality_Pisier} as an area integral. For general $\alpha$ this reads as follows.

\begin{lemma} \label{lemma_schmuck}
 For $\alpha\in(0,\infty)$ and $y>0$, we have
 $$
  \int_{\T}|1+y \zeta|^\alpha \D \mup(\zeta) = 1+ \frac {\alpha^2 y^2} 2 \int_\Dbb |1+y z|^{\alpha-2} G(z) \D z,
 $$
 where $\Dbb$ denotes the complex unit disk and $G$ Green's function of this domain and pole $0$, so $G(z) = \frac 1 \piup \ln \frac 1 {|z|}$ for each $z\in \Dbb$.
\end{lemma}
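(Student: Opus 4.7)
The plan is to apply Green's representation formula to $u(z)=|1+yz|^\alpha$. Recall that for $u\in C^2(\overline{\Dbb})$ one has
\begin{equation*}
 \int_\T u\,\D\mup = u(0) + \frac 1 2 \int_\Dbb G(z)\,\Delta u(z)\,\D z,
\end{equation*}
which follows from Green's second identity applied to $u$ and $G$ on $\Dbb$ (using $\Delta G = -2\delta_0$ in the sense of distributions, $G=0$ on $\T$, and $\partial_n G = -1/\piup$ on $\T$, together with $\D\mup = (2\piup)^{-1}\D\sigma$).

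The key computation is the Laplacian of $u$. Writing $f(z)=1+yz$, which is holomorphic with $f'(z)=y$, one has the standard identity $\Delta |f|^\alpha = \alpha^2\,|f|^{\alpha-2}\,|f'|^2$ wherever $f\neq 0$. Substituting, this gives $\Delta u(z) = \alpha^2 y^2\,|1+yz|^{\alpha-2}$, and together with $u(0)=1$ the representation formula immediately yields the claimed identity.

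The real work lies in justifying this when $u$ is not $C^2$ on $\overline{\Dbb}$. Two separate issues must be addressed: first, for $\alpha<2$ the function $|1+yz|^{\alpha-2}$ blows up at $z=-1/y$; second, when $y>1$ the singular point $-1/y$ actually lies in $\Dbb$, so $u$ fails to be twice differentiable there. I would handle both by a standard regularization: set
\begin{equation*}
 u_\epsilon(z) := \bigl(|1+yz|^2 + \epsilon\bigr)^{\alpha/2}, \qquad \epsilon>0,
\end{equation*}
which is smooth on $\overline{\Dbb}$. A direct computation (treating $1+yz$ as a holomorphic function and using $\Delta = 4\,\partial\bar\partial$) gives
\begin{equation*}
 \Delta u_\epsilon(z) = \alpha^2 y^2\,|1+yz|^2 \bigl(|1+yz|^2+\epsilon\bigr)^{\alpha/2-2} + \alpha\, y^2\, \epsilon \bigl(|1+yz|^2+\epsilon\bigr)^{\alpha/2-2},
\end{equation*}
which one verifies converges pointwise to $\alpha^2 y^2 |1+yz|^{\alpha-2}$ as $\epsilon\to 0$ (away from $z=-1/y$) and is dominated independently of $\epsilon$ by an $L^1(\Dbb, G\,\D z)$ majorant, since the singularity $|z+1/y|^{\alpha-2}$ is integrable in the plane whenever $\alpha>0$ (the exponent exceeds $-2$) and $G$ is bounded away from the boundary.

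The main obstacle is this last domination step: one must produce a uniform-in-$\epsilon$ bound that is integrable against $G$ near the bad point $-1/y$. I expect this to reduce to the elementary estimate $(|1+yz|^2+\epsilon)^{\alpha/2-2}\le |1+yz|^{\alpha-4}$ when $\alpha<4$ (and a trivial bound otherwise), combined with $|1+yz|^2\le |1+yz|^2+\epsilon$ in the first term, giving an overall bound by a constant times $|1+yz|^{\alpha-2}$. After passing to the limit via dominated convergence on both sides of Green's formula applied to $u_\epsilon$, the lemma follows.
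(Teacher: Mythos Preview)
Your proof is correct and takes a genuinely different route from the paper's. The paper argues via stochastic analysis: it writes the circle integral as $\E[|1+yB_\tau|^\alpha]$ for complex Brownian motion $(B_t)$ stopped at the exit time $\tau$ from $\Dbb$, applies It\^o's formula to obtain $\E[|1+yB_\tau|^\alpha] = 1 + \tfrac{\alpha^2 y^2}{2}\,\E\!\int_0^\tau |1+yB_s|^{\alpha-2}\,\D s$, and then invokes the Brownian occupation-time formula to convert the expected time integral into the area integral against $G$. Your approach is the deterministic counterpart (the paper mentions in passing that one could instead use Gauss's divergence theorem): you apply Green's representation formula directly after computing $\Delta|f|^\alpha = \alpha^2|f|^{\alpha-2}|f'|^2$ for holomorphic $f$. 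What you gain is that no probability theory is needed, and you are in fact more explicit than the paper about the regularity issue at $z=-1/y$ when $\alpha<2$ and $y\ge 1$; the It\^o step in the paper's proof glosses over this same point (it is harmless there because planar Brownian motion avoids single points almost surely, but strictly speaking It\^o's formula also needs a regularization or localization). One minor slip: in your expression for $\Delta u_\epsilon$ the second term should carry the coefficient $2\alpha$ rather than $\alpha$ (one finds $\Delta u_\epsilon = |f'|^2(|f|^2+\epsilon)^{\alpha/2-2}\bigl(\alpha^2|f|^2 + 2\alpha\epsilon\bigr)$); this does not affect the argument, since that term is dominated by $2\alpha y^2(|f|^2+\epsilon)^{\alpha/2-1}\le 2\alpha y^2|f|^{\alpha-2}$ and vanishes pointwise as $\epsilon\to 0$.
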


Next we recall the inequality of arithmetic and geometric means in the form used below. For details we refer to \cite[section 6.7]{hardy1934inequalities}.

\begin{theorem}[AM-GM inequality]
 Let $f\colon \T \to \overline \R$ be non-negative almost everywhere on $\T$ and measurable. Let $r$ be a positive real number. Then we have
 \begin{equation} \label{AM-GM-ineq}
  \left(\int_\T f^{-r} \D \mup \right)^{-\frac 1 r} \le \exp \int_\T \ln f  \D \mup \le \left(\int_\T f^r \D \mup \right)^{\frac 1 r},
 \end{equation}
 if all these integrals exist.
\end{theorem}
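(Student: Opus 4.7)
The plan is to obtain both halves of \eqref{AM-GM-ineq} from a single analytic tool: Jensen's inequality applied to the convex function $\exp$ on the probability space $(\T,\mup)$. Because $\mup(\T) = 1$, Jensen's inequality reads
$$
\exp \int_\T g \D \mup \;\le\; \int_\T \exp g \D \mup
$$
for every $\mup$-integrable $g\colon \T \to \R$, and this is the only nontrivial ingredient that I would invoke.

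For the upper estimate in \eqref{AM-GM-ineq} I would apply this inequality with $g = r \ln f$. Since $\exp(r \ln f) = f^r$, Jensen yields
$$
\exp\!\left( r \int_\T \ln f \D \mup \right) \;\le\; \int_\T f^r \D \mup,
$$
and extracting the $1/r$-th root (permissible as $r>0$) gives the right half of \eqref{AM-GM-ineq}. For the lower estimate I would instead take $g = -r\ln f$; the same Jensen step produces
$$
\exp\!\left( -r \int_\T \ln f \D \mup \right) \;\le\; \int_\T f^{-r} \D \mup.
$$
Taking reciprocals reverses the inequality and then a further $1/r$-th root yields the left half of \eqref{AM-GM-ineq}.

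The only delicate point, and therefore the main bookkeeping obstacle, is that $f$ may vanish on a set of positive measure or blow up, so some of the three quantities in \eqref{AM-GM-ineq} may be $0$ or $+\infty$. Under the theorem's hypothesis that all the integrals exist (naturally interpreted in $[0,+\infty]$), each such degenerate configuration collapses \eqref{AM-GM-ineq} to a trivial bound of the form $0 \le \text{something}$ or $\text{something} \le +\infty$, and on the complementary regime Jensen applies verbatim as sketched above. Hence no genuine analytic difficulty arises beyond tracking these boundary cases.
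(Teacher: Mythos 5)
Your argument is correct, but note that the paper does not prove this statement at all: it is quoted as a classical fact with a pointer to \cite[section 6.7]{hardy1934inequalities}, so there is no in-paper proof to compare against. Your route --- Jensen's inequality for the convex function $\exp$ on the probability space $(\T,\mup)$, applied once with $g=r\ln f$ and once with $g=-r\ln f$, followed by taking roots and reciprocals --- is the standard derivation and is essentially what one finds in Hardy--Littlewood--P\'olya. The computations check out: $\exp(r\int_\T \ln f\D\mup)\le\int_\T f^r\D\mup$ gives the right half after extracting the $r$-th root, and $\exp(-r\int_\T \ln f\D\mup)\le\int_\T f^{-r}\D\mup$ gives the left half after inverting. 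Your treatment of the degenerate cases is also sound, though it could be made slightly more explicit: if $\int_\T \ln f\D\mup=-\infty$ the middle term is $0$ and the left inequality still needs $\int_\T f^{-r}\D\mup=+\infty$, which follows from the pointwise bound $f^{-r}=e^{-r\ln f}\ge 1-r\ln f$ on $\{f<1\}$ (and symmetrically for $+\infty$ and the right inequality). With that one-line addition the proof is complete and self-contained, which is arguably an improvement over the paper's bare citation.
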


The usefulness of the AM-GM inequality is due to the fact, that for our specific functions, the geometric mean is easier to calculate than the arithmetic means. Indeed we can evaluate the geometric mean using the familiar Jensen formula (see \cite[p. 207-208]{ahlfors1966complex}). For integrals over the complex unit circle, it has the following form.

\begin{theorem}[Jensen's formula] \label{Jensen}
 Let $f\colon U \to \Cbb$ be analytic on a domain $U \subseteq \Cbb$, where $U$ contains the closed unit disk $\overline \Dbb$. Let $a_1, a_2, \ldots, a_n$ be the zeros of $f$ in the interior of $\overline \Dbb$ (repeated according to multiplicity). Assuming $f(0)\ne 0$, we have
 $$
  \int_\T\ln |f(\zeta)| \D \mup(\zeta) = \ln |f(0)| + \sum_{i=1}^n \ln \frac 1 {|a_i|} .
 $$
\end{theorem}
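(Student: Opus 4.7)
\emph{Plan.} I would combine the three ingredients of Section~\ref{sec2} after reducing to a normalised case. The case $x=0$ is trivial (the inequality becomes $(\alpha/2)^{1/2}|y| \le |y|$, valid since $\alpha\le 2$), and by homogeneity and rotation invariance of $\mup$ the general case reduces to $x = 1$ and $y \in [0,\infty)$. The symmetry $\int_\T |x+\zeta y|^\alpha \D\mup = \int_\T |y+\zeta x|^\alpha \D\mup$, which follows from $|x+\zeta y| = |x\bar\zeta+y|$ on $\T$, lets me further reduce to $y \in [0,1]$: for $y > 1$ one writes $\int_\T |1+\zeta y|^\alpha \D\mup = y^\alpha \int_\T |1+\zeta/y|^\alpha \D\mup$, applies the case $1/y \in [0,1)$ to get a lower bound of $(y^2 + \alpha/2)^{\alpha/2}$, and notes that this dominates $(1 + (\alpha/2)y^2)^{\alpha/2}$ because $(1-\alpha/2)(y^2-1) \ge 0$.

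\emph{Core estimate.} For $y\in[0,1]$, Lemma~\ref{lemma_schmuck} gives
$$\int_\T |1+\zeta y|^\alpha \D\mup(\zeta) = 1 + \frac{\alpha^2 y^2}{2} \int_\Dbb |1+yz|^{\alpha-2} G(z) \D z,$$
and polar coordinates $z = \rho\zeta$ turn the disk integral into $2\int_0^1 \rho \ln(1/\rho) \bigl(\int_\T |1+y\rho\zeta|^{\alpha-2}\D\mup(\zeta)\bigr) \D\rho$. I would then apply the left-hand inequality in \eqref{AM-GM-ineq} to $f(\zeta) = |1+y\rho\zeta|$ with exponent $2-\alpha > 0$ to obtain $\int_\T |1+y\rho\zeta|^{\alpha-2}\D\mup \ge \exp\bigl((\alpha-2)\int_\T \ln|1+y\rho\zeta|\D\mup\bigr)$. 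Because $z \mapsto 1 + y\rho z$ is analytic on $\overline\Dbb$ with its only zero $-1/(y\rho)$ outside $\overline\Dbb$ (here $y\rho \le 1$ is crucial), Jensen's formula (Theorem~\ref{Jensen}) makes the inner logarithmic integral vanish. Thus the lower bound is $1$, and since $\int_0^1 \rho\ln(1/\rho)\D\rho = 1/4$ I arrive at
$$\int_\T |1+\zeta y|^\alpha \D\mup \ge 1 + \tfrac{\alpha^2}{4} y^2.$$

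\emph{Finishing and sharpness.} Concavity of $t \mapsto t^{\alpha/2}$ for $\alpha \le 2$ gives the tangent-line bound $(1+s)^{\alpha/2} \le 1 + (\alpha/2) s$; with $s = (\alpha/2)y^2$ this yields $(1+(\alpha/2)y^2)^{\alpha/2} \le 1 + (\alpha^2/4)y^2 \le \int_\T |1+\zeta y|^\alpha \D\mup$, and taking $\alpha$-th roots completes the proof of \eqref{inequality_theorem}. For sharpness of $\alpha/2$, I would expand both sides in $y$ about $0$: the left-hand side gives $(1+\lambda y^2)^{1/2} = 1 + (\lambda/2) y^2 + O(y^4)$, while direct expansion together with $\int_\T(\re\zeta)^2 \D\mup = 1/2$ gives $\bigl(\int_\T |1+\zeta y|^\alpha \D\mup\bigr)^{1/\alpha} = 1 + (\alpha/4) y^2 + O(y^4)$. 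Letting $y\to 0^+$ then forces $\lambda \le \alpha/2$.

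\emph{Main obstacle.} The delicate point is lining up the three tools so that the negative exponent $\alpha - 2$ produced by Lemma~\ref{lemma_schmuck} falls exactly where \eqref{AM-GM-ineq} produces a geometric mean that Jensen's formula evaluates to $1$; the condition $y\rho \le 1$ needed for Jensen is what forces the initial reduction to $|x| \ge |y|$. Both the AM-GM step and the final concavity step are equalities at $\alpha = 2$, and both are tight only as $y \to 0$, which is precisely what pins the sharp constant to $\alpha/2$.
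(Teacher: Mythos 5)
You have proved the wrong statement. The result you were asked to establish is Theorem \ref{Jensen}, Jensen's formula itself, i.e.\ the identity $\int_\T \ln|f(\zeta)|\D\mup(\zeta)=\ln|f(0)|+\sum_{i=1}^n\ln\frac1{|a_i|}$ for $f$ analytic on a neighbourhood of $\overline\Dbb$ with $f(0)\ne0$. Your proposal instead sketches a proof of the main result, Theorem \ref{theorem_main}, and along the way it \emph{invokes} Jensen's formula as a black box (to evaluate $\int_\T\ln|1+y\rho\zeta|\D\mup(\zeta)$). As a proof of Theorem \ref{Jensen} this is empty, and circular in spirit: the one thing to be established is exactly the tool you take for granted. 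A genuine proof runs along classical lines: assuming first that $f$ has no zeros on $\T$, set $F(z):=f(z)\prod_{i=1}^n\frac{1-\overline{a_i}\,z}{a_i-z}$; the poles of the product cancel the interior zeros of $f$, so $F$ is analytic and zero-free on a neighbourhood of $\overline\Dbb$, hence $\ln|F|$ is harmonic there and the mean value property gives $\int_\T\ln|F|\D\mup=\ln|F(0)|=\ln|f(0)|+\sum_{i=1}^n\ln\frac1{|a_i|}$, while $|F|=|f|$ on $\T$ because each factor $\frac{1-\overline{a_i}\,z}{a_i-z}$ has unit modulus on the circle; zeros on $\T$ are then handled by a limiting argument, $\ln|f|$ having only integrable singularities there. (The paper itself offers no proof of Theorem \ref{Jensen}; it recalls it from Ahlfors.)

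For what it is worth, your sketch of Theorem \ref{theorem_main} is coherent and even a little slicker than the paper's: the inversion $y\mapsto1/y$ combined with $\int_\T|1+\zeta y|^\alpha\D\mup=y^\alpha\int_\T|1+\zeta/y|^\alpha\D\mup$ would let one avoid the paper's intermediate case $y^2\in\bigl(1,(1-\frac\alpha2)^{-1}\bigr)$ and the accompanying Lemma \ref{lemma3}. But that is an answer to a different question; as it stands, the statement actually posed remains unproved.
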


\section{Proof of Theorem \ref{theorem_main}} \label{sec_proof}

Using the tools collected in section \ref{sec2}, we can now give a proof of Theorem \ref{theorem_main}. Note that for $x=0$ or $y=0$, the inequality \eqref{inequality_theorem} is trivial. Hence, by rescaling and rotation, it suffices to show, that
\begin{align}\label{inequality_R}
 \left(\int_\T |1+y\zeta|^\alpha\D \mup(\zeta)\right)^{\frac 1\alpha} \ge \left(1+\lambda y^2\right)^{\frac 1 2} \qquad \text{for each } y>0
\end{align}
holds for $\lambda=\frac\alpha 2$ and does not hold for $\lambda>\frac \alpha 2$.

We first show, that $\lambda= \frac \alpha 2$ satisfies \eqref{inequality_R}.
To this end, we define  functions $a,h,g\colon [0,\infty) \to \R$ by
\begin{align*}
 a(y) &:= \int_\T |1+y\zeta|^\alpha \D \mup (\zeta),\\
 h(y) &:= \begin{cases}
        1+\frac{\alpha^2}{4} y^2,  &y^2\in [0,1]
        \\
        \frac{\alpha^2} 4 + \left(y^2\right)^{\frac \alpha 2} - \frac \alpha 2 \left( 1-\frac \alpha 2 \right) \ln y^2, &y^2\in \left(1, \left(1-\frac \alpha 2\right)^{-1} \right) \\
        y^\alpha,  &y^2 \ge\left(1-\frac \alpha 2 \right)^{-1}
    \end{cases}, \\
 g(y) &:= \left(1+\frac \alpha 2 y^2 \right)^{\frac \alpha 2}
\end{align*}
and show
\begin{equation} \label{inequality_proof}
 a(y) \ge h(y) \ge g(y) \qquad \text{for } y > 0.
\end{equation}
\begin{figure}[htp]
    \centering
    \includegraphics[width=.3\textwidth]{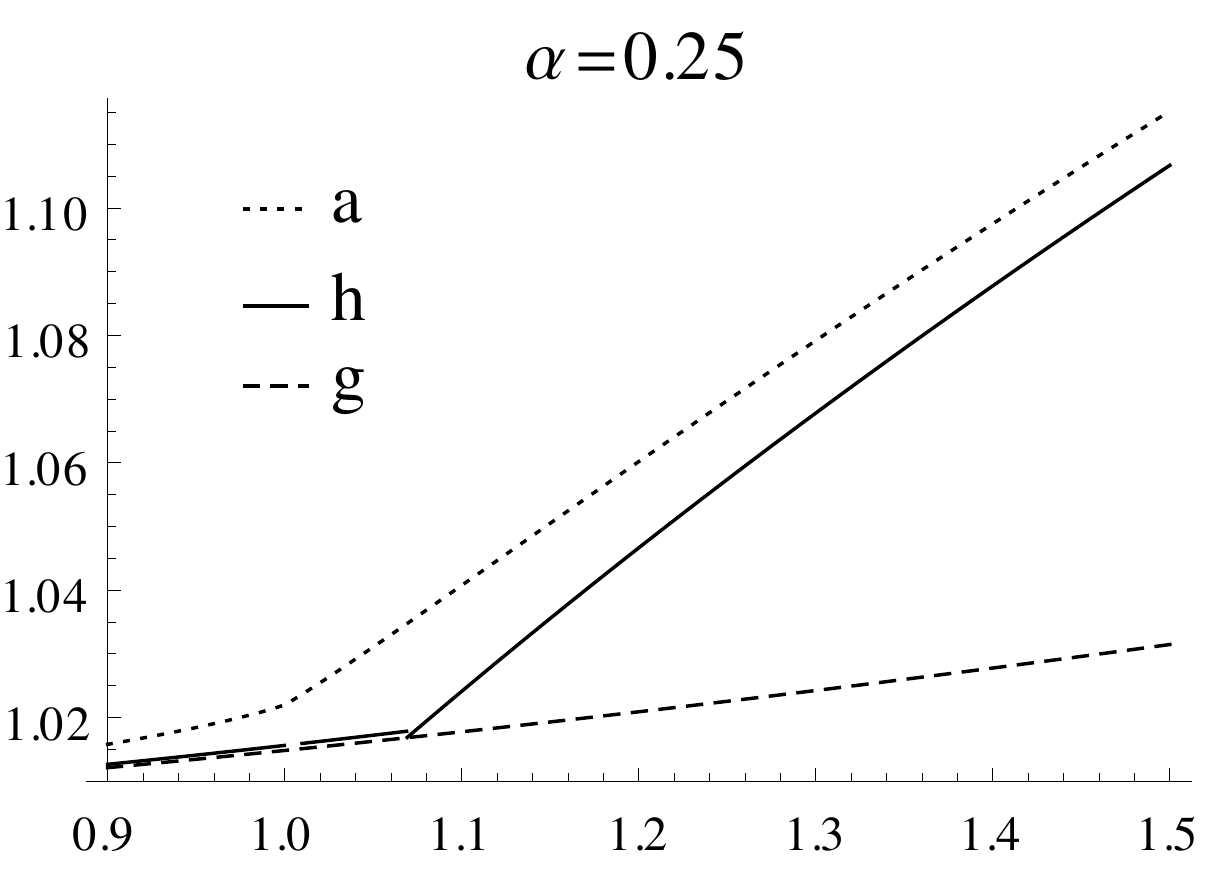}\hfill 
    \includegraphics[width=.3\textwidth]{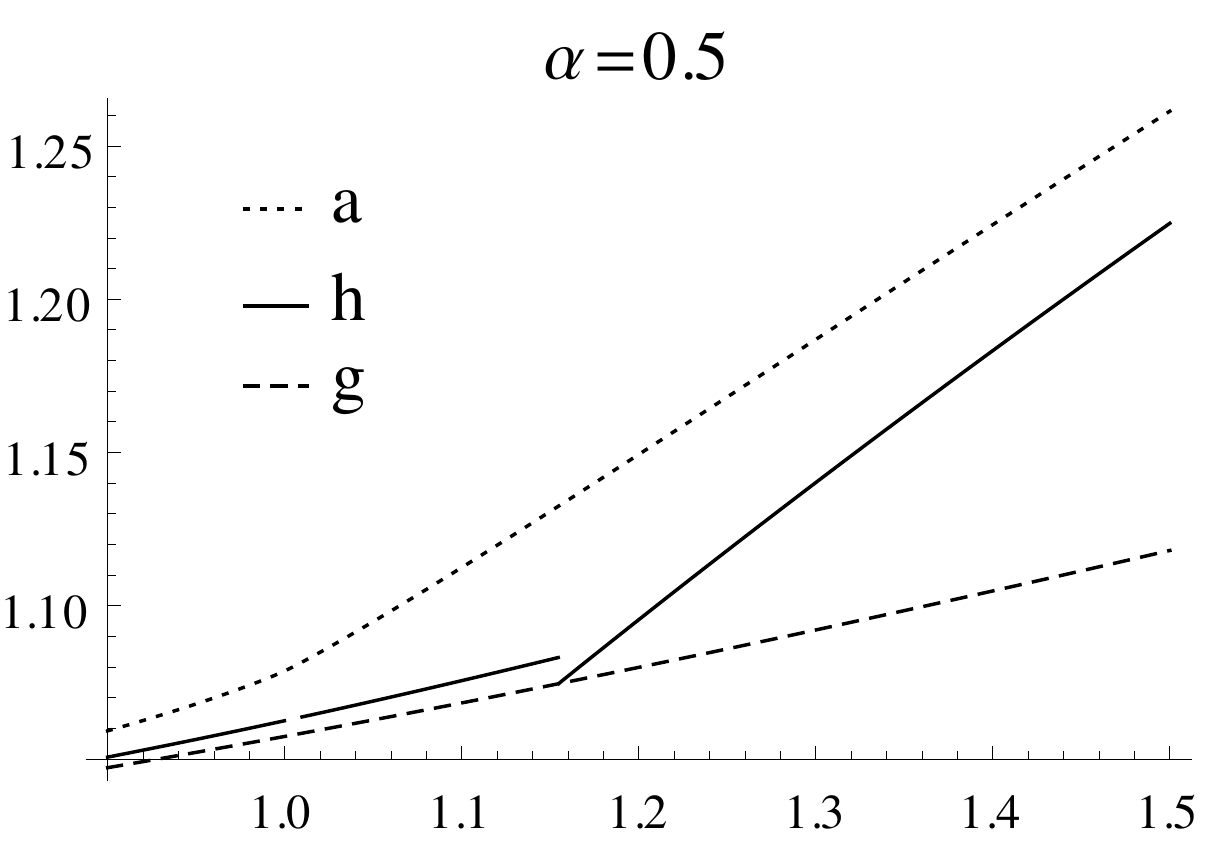}\hfill
    \includegraphics[width=.3\textwidth]{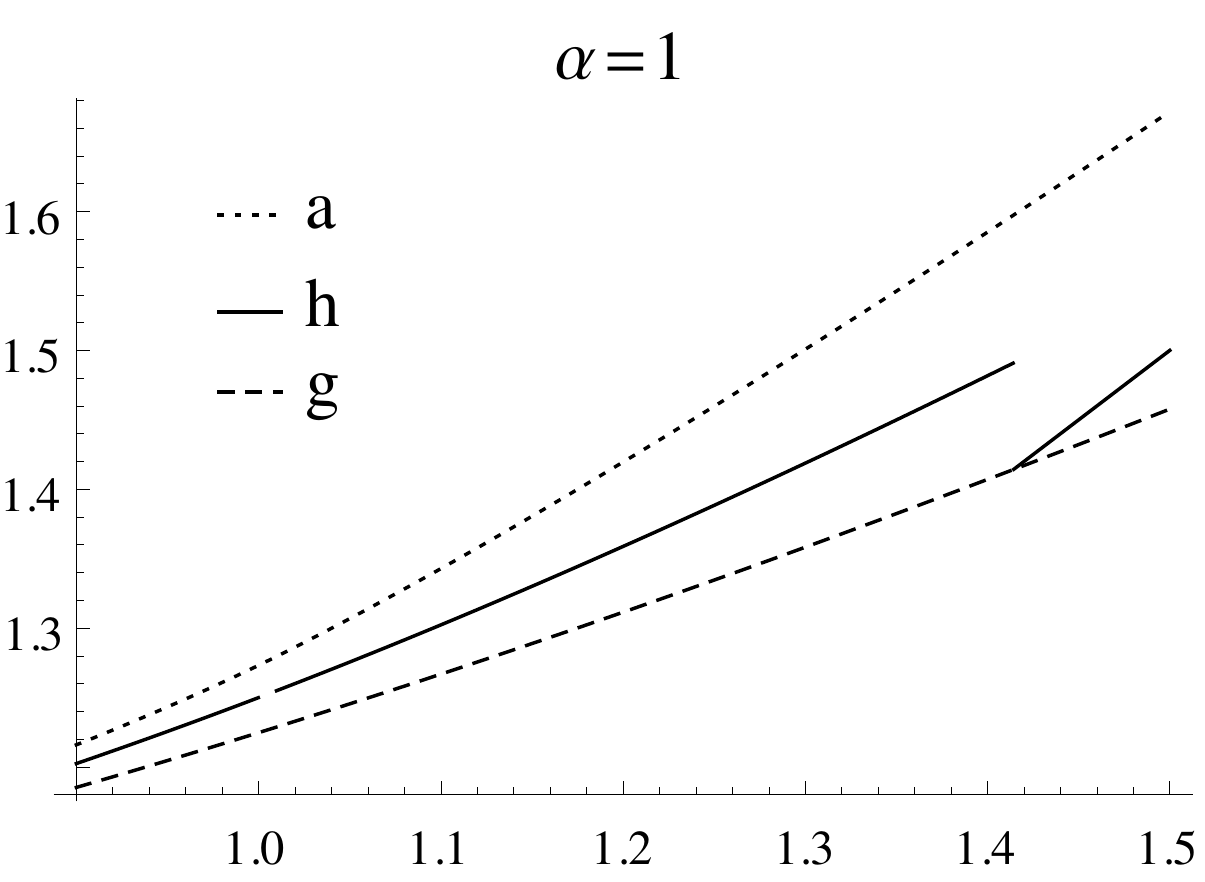}
    \caption{Plots of the functions $a, h, g$ with different values of $\alpha$} 
    \label{figure2}
\end{figure} 

Therefore, we fix $y>0$ and distinguish three cases, according to the definition of $h$.
 \begin{description}
   \item[Case $y^2 \ge \left(1-\frac \alpha 2\right)^{-1}$:]
    Using the AM-GM inequality and applying Jensen's formula to the analytic function $z\mapsto 1+ y z$ yield
    $$
     \left( \int_\T |1+y\zeta|^\alpha\D \mup(\zeta) \right)^{\frac 1 \alpha} \ge \exp \int_\T \ln |1+y \zeta| \D \mup(\zeta) = \exp(\max\{0,\ln y\}) = \max\{1,y\}.
    $$
    In this case, we particularly have $y\ge 1$, so we conclude
    $$
     a(y) \ge y^\alpha = h(y).
    $$
    On the other hand, $y^2 \ge \left(1-\frac \alpha 2\right)^{-1}$ implies
    $$
     h(y) = y^\alpha \ge \left(1+\frac \alpha 2 y^2\right)^{\frac \alpha 2} = g(y).
    $$
 \end{description}
For the other two cases, we use the alternative representation of $a$ obtained by Lemma \ref{lemma_schmuck}, i. e. 
\begin{align*}
 a(y) &= 1 + \frac{\alpha^2 y^2} 2 \int_{\Dbb} |1+y z|^{\alpha-2}G(z) \D z \\
 &=1 + \alpha^2 y^2 \int_0^1 \int_\T |1+y r \zeta|^{\alpha-2} \D\mup(\zeta) r \ln \frac 1 r \D r.
\end{align*}
We note $\alpha-2\le 0$, so the AM-GM inequality for negative exponents and Jensen's formula, applied to the analytic function $z\mapsto 1+y r z$, yield
  $$
    \left( \int_{\T} |1+ y r \zeta |^{\alpha-2} \D \mup(\zeta) \right)^{\frac 1 {\alpha-2}} \le \exp \int_\T \ln |1+ y r \zeta| \D \mup(\zeta) = \max\{ 1,y r\}.
  $$
Hence, we arrive at
$$
    a(y) \ge 1+ \alpha^2 y^2 \int_0^1 \min\left\{1,(yr)^{\alpha-2} \right\} r\ln \frac 1 r \D r.
$$

 \begin{description}
    \item[Case $y^2 \in\left(1, \left(1-\frac \alpha 2\right)^{-1} \right)$:]
    By calculation, we obtain 
    \begin{align*}
        \int_0^1 \min \left\{ 1,(yr)^{\alpha-2} \right\} r \ln \frac 1 r \D r &= \int_0^{\frac 1 y} r \ln \frac 1 r \D r + y^{\alpha-2} \int_{\frac 1 y}^1 r^{\alpha-1} \ln \frac 1 r \D r \\
        &= \frac {1+ 2 \ln y}{4 y^2} + y^{\alpha-2} \left( \frac 1 {\alpha^2} - \frac {1+\alpha\ln y}{\alpha^2 y^\alpha} \right).
    \end{align*}
    Therefore, we have
    $$
        a(y) \ge \frac{\alpha^2} 4 + \left(y^2\right)^{\frac \alpha 2} - \frac \alpha 2 \left( 1-\frac \alpha 2 \right) \ln y^2 = h(y).
    $$
    The inequality $h(y) \ge g(y)$ is an exercise in elementary calculus, carried through in the appendix (Lemma \ref{lemma3} with $\beta := \frac \alpha 2$ and $t:= y^2$).
    \item[Case $y\le 1$:]
    Here, the integral simplifies to
    $$
        \int_0^1 \min\left\{ 1,(y r)^{\alpha-2} \right\} r \ln \frac 1 r \D r = \int_0^1 r \ln \frac 1 r \D r = \frac 1 4.
    $$
    Consequently, we have
    $$
        a(y) \ge 1+\frac{\alpha^2}{4} y^2 = h(y).
    $$
    The inequality $h(y)\ge g(y)$ is again verified by elementary calculus (Lemma \ref{lemma2} with $\beta := \frac \alpha 2$ and $t:= y^2$).
 \end{description}
 
 It remains to show, that $\frac \alpha 2$ is the largest constant satisfying \eqref{inequality_R}. To this end, we fix $\lambda \in \R$, define $b(y) := (1+ \lambda y^2)^{\frac \alpha 2}$ for each $y \ge 0$, and assume
 $$
  a(y) \ge b(y) \quad \text{for } y>0.
 $$
 Obviously, we have $a(0)=b(0)=1$ and a short calculation shows $a'(0)=b'(0) = 0$. From the assumption, we now conclude $a''(0) \ge b''(0)$. Using $a''(0) = \frac{\alpha^2} 2$ and $b''(0) = \alpha \lambda$, we obtain $\lambda \le \frac \alpha 2$. 
 \qed

\section{Appendix} \label{secAppendix}

\subsection{Proof of Lemma \ref{lemma_schmuck}}

We give a proof of the identity obtained and exploited by the third named author, i. e.
\begin{equation} \label{formula_schmuck}
 \int_\T |1+y\zeta|^\alpha \D \mup(\zeta) = 1+ \frac{\alpha^2 y^2} 2 \int_\Dbb |1+yz|^{\alpha-2} G(z) \D z \qquad \text{for } y>0, \alpha \in (0,\infty).
\end{equation}
Here we use techniques of stochastic analysis. Alternatively, one could obtain \eqref{formula_schmuck} using Gauss's divergence theorem. See for instance \cite[p. 355-356]{mueller05}.

Let $(B_t)_{t\ge 0} = \left(B_t^{(1)} + \iup B_t^{(2)}\right)_{t\ge 0}$ be a complex Brownian motion started at the origin and $\tau := \inf\left\{ t>0 : |B_t|>1 \right\}$. We then have
 $$ 
  \int_{\T} |1+ y \zeta |^\alpha \D \mup(\zeta)= \E\left[ |1+y B_\tau|^\alpha \right].
 $$
 Applying It\^o's formula to the continuous martingale $(1+y B_t)_{t>0}$ gives
    \begin{align*}
    |1+y B_t|^\alpha = 1 &+ \alpha y \int_0^t \left(1+y B_s^{(1)}\right) |1+y B_s|^{\alpha-2} \D B_s^{(1)} + \alpha y^2 \int_0^t B_s^{(2)} |1+y B_s|^{\alpha-2} \D B_s^{(2)} \\
    &+\frac {\alpha^2} 2 y^2 \int_0^t |1+y B_s|^{\alpha-2} \D s,
    \end{align*}
    whenever $t < \tau$, and hence
    $$
     \E\left[ |1+y B_\tau|^\alpha \right] = 1 + \frac{\alpha^2} 2 y^2 \E\left[ \int_0^\tau |1+y B_s |^{\alpha-2} \D s\right].
    $$ 
    Let $G$ denote Green's function of the unit disk with pole $0$.
    Using the occupation time formula for the Brownian motion \cite[Theorem 3.32, section 3.3, p. 80]{moerters2010brownian}, we obtain 
    \begin{align*}
    1+ \frac {\alpha^2} 2 y^2 \E\left[\int_0^\tau |1+y B_s |^{\alpha-2} \D s  \right] = 1+ \frac {\alpha^2} 2 y^2 \int_\Dbb |1+y z|^{\alpha-2} G(z) \D z.
    \end{align*} 
    \qed

\subsection{Elementary estimates}

Here we collect and prove the elementary estimates used in section \ref{sec_proof}. We begin with a special case of the classical Bernoulli inequality for real exponents.

\begin{lemma}[Bernoulli's inequality] \label{lemma2}
 Let $\beta \in[0,1]$ and $t>0$. We have
 $$
  1+\beta^2 t \ge (1+\beta t)^\beta.
 $$
\end{lemma}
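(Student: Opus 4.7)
The plan is to reduce the statement to the classical form of Bernoulli's inequality, $(1+x)^\beta \le 1+\beta x$ for $\beta\in[0,1]$ and $x>-1$, applied with $x := \beta t$. Since $\beta,t\ge 0$, the hypothesis $x>-1$ holds, and the classical inequality yields
\[
 (1+\beta t)^\beta \le 1 + \beta\cdot \beta t = 1+\beta^2 t,
\]
which is exactly the desired estimate.

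To make the argument self-contained, I would prove the classical inequality directly by a one-variable monotonicity argument. Define $f\colon[0,\infty)\to\R$ by $f(t):=1+\beta^2 t-(1+\beta t)^\beta$. Then $f(0)=0$, and
\[
 f'(t)=\beta^2-\beta^2(1+\beta t)^{\beta-1}=\beta^2\bigl(1-(1+\beta t)^{\beta-1}\bigr).
\]
Because $\beta-1\le 0$ and $1+\beta t\ge 1$, the factor $(1+\beta t)^{\beta-1}$ does not exceed $1$, so $f'(t)\ge 0$ on $[0,\infty)$. Hence $f(t)\ge f(0)=0$ for all $t\ge 0$, which is the claim.

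There is no substantial obstacle here: the boundary values $\beta\in\{0,1\}$ make both sides agree (or reduce the inequality to $1+t\ge 1+t$), and for $\beta\in(0,1)$ the monotonicity of $f$ computed above closes the argument. The only point to observe is that the power $(1+\beta t)^{\beta-1}$ is well defined because $1+\beta t>0$, which is automatic under the hypotheses.
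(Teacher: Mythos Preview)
Your proof is correct and follows essentially the same elementary-calculus route as the paper. The only cosmetic difference is that the paper uses a second-order Taylor expansion with Lagrange remainder (observing that $\phi''(\xi)=\beta^3(\beta-1)(1+\beta\xi)^{\beta-2}\le 0$), whereas you argue via nonnegativity of the first derivative of the difference function; both are standard variants of the same idea.
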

\begin{proof}
 Taylor's theorem, applied to $\phi(t) := (1+\beta t)^\beta$, guarantees the existence of a $\xi \in [0,t]$, such that
 $$
  \phi(t) = \phi(0) + \phi'(0) t + \phi''(\xi) \frac{t^2} 2 = 1+ \beta^2 t + \beta^3 (\beta-1) (1+\beta\xi)^{\beta-2} \frac{t^2} 2.
 $$
 Using this $\xi$, we obtain
 $$
  1+\beta^2 t - (1+\beta t)^\beta = \beta^3 (1-\beta) (1+\beta \xi)^{\beta-2} \frac{t^2} 2 \ge 0.
 $$
\end{proof}

\begin{lemma} \label{lemma3}
 Let $\beta \in[0,1]$ and $t\in\left[1,\frac 1 {1-\beta}\right]$. We have
 $$
  \beta^2+t^\beta - \beta(1-\beta)\ln t \ge (1+\beta t)^\beta.
 $$
\end{lemma}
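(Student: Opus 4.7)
The strategy is to prove $\beta^2 + t^\beta - \beta(1-\beta)\ln t \ge (1+\beta t)^\beta$ by splitting $(1+\beta t)^\beta - t^\beta$ via the mean value theorem into a term that can then be compared to $\beta^2 - \beta(1-\beta)\ln t$. A direct attack on $F(t) := \beta^2 + t^\beta - \beta(1-\beta)\ln t - (1+\beta t)^\beta$ through $F''$ yields a second derivative of ambiguous sign, so a two-step decomposition seems more effective.

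The first step applies the MVT to $\phi(s) = s^\beta$ on the interval with endpoints $t$ and $1 + \beta t$; these are correctly ordered because $t \le 1/(1-\beta)$ is equivalent to $(1-\beta) t \le 1$, i.e.\ $t \le 1 + \beta t$. One obtains $(1+\beta t)^\beta - t^\beta = \beta\, c^{\beta-1}\bigl(1-(1-\beta)t\bigr)$ for some $c \in [t, 1+\beta t]$. Since $c \ge t \ge 1$ and $\beta - 1 \le 0$, the decay of $s \mapsto s^{\beta-1}$ gives $c^{\beta-1} \le t^{\beta-1}$; multiplying by the nonnegative factor $\beta\bigl(1 - (1-\beta)t\bigr)$ yields
\[
 (1+\beta t)^\beta - t^\beta \le \beta t^{\beta-1} - \beta(1-\beta) t^\beta.
\]

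What remains is to show $\beta t^{\beta-1} - \beta(1-\beta)t^\beta \le \beta^2 - \beta(1-\beta)\ln t$ on $[1, 1/(1-\beta)]$. After dividing by $\beta$ this is $J(t) := \beta + (1-\beta)t^\beta - (1-\beta)\ln t - t^{\beta-1} \ge 0$; one checks $J(1) = 0$ and
\[
 J'(t) = (1-\beta)\, t^{\beta-2}\bigl(1 + \beta t - t^{1-\beta}\bigr),
\]
so the problem reduces to $t^{1-\beta} \le 1 + \beta t$ on the interval. By concavity of $s \mapsto s^{1-\beta}$ the tangent at $s = 1$ gives $t^{1-\beta} \le \beta + (1-\beta) t$, and I would finish by computing $1 + \beta t - \bigl(\beta + (1-\beta)t\bigr) = (1-\beta) - (1-2\beta)t$: for $\beta \ge 1/2$ this is nonnegative because $(1-2\beta)t \le 0$, and for $\beta < 1/2$ it reduces to $t \le (1-\beta)/(1-2\beta)$, which holds throughout $[1, 1/(1-\beta)]$ since $(1-\beta)^2 \ge 1 - 2\beta$ is equivalent to $\beta^2 \ge 0$.

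The delicate point is choosing the right intermediate quantity in the MVT step. Cruder bounds such as Bernoulli's inequality applied to obtain $(1+\beta t)^\beta \le 1 + \beta^2 t$ (Lemma~\ref{lemma2}) are too weak once $t$ leaves a neighborhood of~$0$; the estimate above succeeds precisely because the factor $1 - (1-\beta)t$ vanishes at the right endpoint $t = 1/(1-\beta)$, matching the behavior of $F$ there. After that choice, the residual $J \ge 0$ becomes routine calculus.
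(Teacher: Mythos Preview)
Your argument is correct. The two applications of the mean value theorem, the bound $c^{\beta-1}\le t^{\beta-1}$, the computation of $J'(t)$, and the tangent-line/convexity step all check out; in particular the final case split on $\beta\gtrless 1/2$ cleanly disposes of $t^{1-\beta}\le 1+\beta t$ on the interval.

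The paper proceeds differently. It applies the MVT (first-order Taylor) in the variable $t$ about the left endpoint $t=1$, writing $\phi(t)=\phi(1)+(t-1)\phi'(\xi)$ for $\phi(t)=\beta^2+t^\beta-\beta(1-\beta)\ln t-(1+\beta t)^\beta$; then $\phi(1)=\beta^2+1-(1+\beta)^\beta\ge 0$ is exactly Lemma~\ref{lemma2}, and $\phi'(\xi)\ge 0$ follows in one line from $\xi\le 1+\beta\xi$ (equivalent to $\xi\le 1/(1-\beta)$), which gives $(1+\beta\xi)^{\beta-1}\le\xi^{\beta-1}$ and hence $\phi'(\xi)\ge\beta(1-\beta)(\xi^{\beta-1}-\xi^{-1})\ge 0$. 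So both proofs ultimately pivot on the same observation $t\le 1+\beta t$ on $[1,1/(1-\beta)]$, but the paper uses it once on the derivative of the full difference, while you use it first to order the MVT endpoints $t$ and $1+\beta t$ and then a second time (via the tangent bound) inside the residual $J$. The paper's route is shorter and reuses Lemma~\ref{lemma2}; yours is self-contained (it never invokes Lemma~\ref{lemma2}) and makes more explicit why the right endpoint $t=1/(1-\beta)$ is the natural cutoff, since your MVT factor $1-(1-\beta)t$ vanishes there.
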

\begin{proof}
 Taylor's theorem, applied to $\phi(t) := \beta^2+t^\beta-\beta(1-\beta)\ln t -(1+\beta t)^\beta$, guarantees the existence of a $\xi \in [1,t]$, such that
 $$
  \phi(t) = \phi(1)+(t-1)\phi'(\xi) = \beta^2+1-(1+\beta)^\beta + (t-1) \beta \left( \xi^{\beta-1} - (1-\beta)\xi^{-1} - \beta(1+\beta\xi)^{\beta-1} \right).
 $$ 
 From Lemma \ref{lemma2} we know
 $$
  \beta^2 + 1 -(1+\beta)^\beta \ge 0.
 $$
 $\xi\in\left[1,\frac 1 {1-\beta} \right]$ implies $\xi\le 1+\beta \xi$ and thus
 $$
  \xi^{\beta-1} - (1-\beta)\xi^{-1} - \beta(1+\beta\xi)^{\beta-1} \ge \xi^{\beta-1} - (1-\beta)\xi^{-1} -\beta \xi^{\beta-1} = (1-\beta) \left( \xi^{\beta-1} - \xi^{-1} \right) \ge 0.
 $$
\end{proof}

\bibliographystyle{abbrv} 
\bibliography{references}

\end{document}